\newtheorem{theo}{Theorem}[section]
\newtheorem{prop}[theo]{Proposition}
\newtheorem{lem}[theo]{Lemma}
\newtheorem{dis}[theo]{Discussion}
\newtheorem{rem}[theo]{Remark}
\newtheorem{assu} {Assumption}[section]
\newcommand{\N}{\mathbb{N}}
\title[Free Boundaries for Convertible Bonds]{Analysis of  Free Boundaries for Convertible Bonds, with a call feature}
\author{ Sadna Sajadini}
\address{ Department of Mathematics, Royal Institute of Technology, Stockholm}
\date{\today}
\email{sadna@math.kth.se}
\thanks{The author would like to deeply thank Professor Henrik Shahgholian for  suggesting the problem and teaching me patiently, all the details in this work. She also thanks  Dr. John Andersson for his fruitful comments.}
\subjclass[2000]{Primary: 35R35, 91G80}
\keywords{Convertible Bonds, Free boundary, Regularity, Blow up.}
\begin{document}
%\date{\today}
\date{\today}
%\begin{large}
\maketitle

\begin{abstract} Convertible bonds give rise to the so-called free boundary;
   i.e., an unknown boundary between continuation and conversion regions of the bond.
    The characteristic feature of such a bond, with an extra call feature,
     is that the free boundary may reach all the way to the fixed boundary. Our intention in this paper is to study the
    behavior of the free boundary in the vicinity of a touching point with the fixed boundary.
     Along the lines of our analysis we will also produce some results on regularity of solutions (value of the bond) up to the fixed boundary.

Our methods are robust and of general nature, and can be applied to fully nonlinear equations.
 In particular, we shall obtain uniform results for the regularity of both solutions and  their free boundaries.
\end{abstract}

\section{Introduction and Backgrounds}

The goal of this paper is to investigate some properties of the solution and the free boundary arising from pricing  convertible bonds with the additional call feature, by PDE methods. But first we overview some basic financial notions, we are dealing with, in this paper.

\subsection{What are Convertible Bonds?}

 \emph{Bond} is a contract which is paid in advance and yields a specified amount on a known date, which is usually called maturity (expiry) date. It is commonly issued by
the  government or major companies that can guarantee the pay back of the predetermined amount to the holder, on the maturity date. Bonds may also pay a known cash dividend, commonly named coupon, at intervals up to and including the maturity time. 
   
Face value for a bond (par value) refers to the amount paid to the holder at the maturity time. In this paper we shall take the face value to be $K$.

 \emph{Convertible Bond} is a bond that entitles the holder the right  to convert it into an agreed-upon amount of the company stock or asset, usually at any
 time of the holder's selection  and sometimes at certain times during its lifespan.

\emph{Call feature} refers to the issuing company's right to buy back the bond for an amount $K$; it is by intention we have taken the
 face value and the call value equal. Clearly, the price of the convertible bond $V$ is
 less than this amount, i.e., $V(x,t) \leq  K$ (otherwise arbitrage to the benefit of the bank will take place). A convertible bond  with call feature is worth less than a standard one. 

\subsection{Pricing Convertible Bonds} \label{PCB}
Let $x=x(t)$ be the price of the stock at time $t$.
 Consider a portfolio, consisting of being long one convertible bond and short a number $\triangle$ of the stock at time $t$,
$$\Pi = V- \triangle x.$$
Suppose that the stock price follows a lognormal random walk, $dx=r x \,dt+ \sigma x \, dW$, where, $\sigma$ is the volatility of the stock, and  $r$ indicates the interest rate and $W$ is a Wiener process. Applying Ito's formula, one can derive the changes in portfolio's value
$$d\Pi=\frac{\partial V}{\partial t}dt+ \frac{\partial V}{\partial x}dx+\frac{1}{2} {\sigma}^2 x^2\frac{\partial ^2 V}{\partial x^2}dt-\triangle qx dt-\triangle dx+cdt,$$
where $c$ and $q$ are the coupon payment of the bond and the dividend of the stock respectively (see \cite{W} for more detailed calculation).
Since the return should be at most that of a bank deposit and according to the fact that $d\Pi=r \Pi dt$, we conclude
$$
 r(V-\triangle x) dt \geq\frac{\partial V}{\partial t}dt+ \frac{\partial V}{\partial x}dx+\frac{1}{2} {\sigma}^2 x^2\frac{\partial ^2 V}{\partial x^2}dt-\triangle qx dt
-\triangle dx+cdt.
$$
In order to have a risk-free portfolio, using $\Delta-$hedging we consider $\triangle=\frac{\partial V}{\partial x}$.
Therefore
$$\frac{\partial V}{\partial t}+\frac{1}{2} \sigma^2 x^2 \frac{\partial ^2 V}{\partial x ^2}+(r-q)x\frac{\partial V}{\partial x}-rV+c(x,t)\leq 0,$$
with the terminal condition
$$V(x,T)=K.$$
On the other hand, at any time prior to expiry, the bond may be exchanged into $\gamma$ number of shares, so
$$V\geq \gamma x,$$
where $\gamma$ is called conversion factor.
Furthermore when the stock price moves up, it is apparent that the bondholder's will is to convert the bond into the pre-determined number of shares rather than getting the face value on the expiry date;
$$ V\rightarrow \gamma x \qquad \hbox{as} \qquad x\rightarrow \infty.$$
Conversely if there is no underlying asset, i.e., $x=0$, then the bond value satisfies the  equation
$$ \partial_t V + c = rV , $$
with termination value $V(0,T)= K$,
and hence admits a solution 
\begin{equation}\label{termination}
V(0,t)=K e^{-r (T-t)}+ \frac{c}{r} ( 1- e^{-r(T-t)}).
\end{equation}
Even though this seems obvious from a financial point of view, we can not claim this from a PDE point of view (at least it is not straightforward). Such a claim involves a verification of 
\begin{equation}\label{bdry-regul-0}
\lim_{x\to 0}x^2V_{xx}(x,t)=\lim_{x\to 0}xV_{x}(x,t)=0 ,
\end{equation}
for a solution to our problem. This in general may fail.
In this paper we shall consider solution only in a class with property (\ref{bdry-regul-0}). This fact (or mentioning of it) is ignored many times in the existing literature.

%%%%%%%%%%%%%%
%%%%%%%%%%%%%%

\subsection{Notation}
We shall use the following notations in this paper.
\begin{equation*}
\begin{array}{rl}
 X: & (x,t),\\
 K: & \text{Call price and Face value of the bond,}\\
    & \text{in this paper both are equal} ,\\
 c: & \text{Coupon rate (related to the bond),}\\
 q: & \text{Dividend rate (related to the stock),}\\
 r: & \text{Interest rate,}\\
 \gamma: & \text{Conversion factor,}\\
 V(x,t): & \text{Price of the Convertible Bond,}\\
 D_T: & (0,\frac{K}{\gamma})\times (0,T) ,\\
 B_\rho: &\{y\in \mathbb{R}, \,\,\mid x-y\mid < \rho \},\\
 Q_\rho(X): & B_\rho \times (t-\rho^2,t+\rho^2),\\
  Q^-_\rho(X) :& Q_\rho(X) \cap {\mathbb{R}}^- ,\\        
 \Lambda: &\left\lbrace (x,t): V=\gamma x \right\rbrace ,\\
 \Gamma: &\partial \left\lbrace V> \gamma x\right\rbrace \cap D_T \,\, (\text{The free boundary}),\\
 \partial_p :& \text{Parabolic boundary},\\
 \text{Continuation region}: &\{ (x,t): \gamma x<V(x,t)< K\},\\
 \text{Call region}: &\{(x,t): V(x,t)= K\},\\
 \text{Conversion region}: &\{(x,t): x\geq 0,\quad V=\gamma x\}.\\
\end{array}
\end{equation*}

\subsection{Problem Statement }
 Let the operator  $ \mathcal{L}$ be as follows
 \begin{equation}\label{l}
 \mathcal{L}=-\partial_t -\frac{1}{2}\sigma^2 x^2 \partial_{xx} -(r-q)x\partial_x +r.
 \end{equation}
We shall consider the convertible bond with the  extra call feature. If $K$ is the call price, set by the firm, then theoretically we have
$$\gamma x \leq V\leq K.$$

Hence we have the following two constraint variational inequality
\begin{eqnarray*}
  \begin{cases}
\mathcal{L}V=c,  &\{\gamma x <V< K\}\cap  D_T, \\
\mathcal{L}V\geq c,  & \{ V=\gamma x\} \cap  D_T, \\
\mathcal{L}V\leq c,  & \{ V=K \} \cap D_T, \\
V(K/\gamma ,t)=K, & 0 \leq t \leq T, \\
V(x,T)=K,        &         0\leq x \leq K/\gamma ,\\
%V(0,t)=K e^{-r (T-t)}+ \frac{c}{r} ( 1- e^{-r(T-t)})&   0\leq t \leq T ,
\end{cases}
\end{eqnarray*}
where $D_T=(0,\frac{K}{\gamma})\times (0,T) $. To give  meanings to the above equations we shall look for solutions
 $V$  in the class $ W^{2,2}_{x}\cap W^{1,2}_{t} (D_T)$. 
Observe that taking the equation in this sense forces a smooth fit along the free boundary $\partial \{V> \gamma x\}$.
 From a financial point of view it is natural to assume that the value function $V$ should be continuously differentiable  in order to avoid arbitrage.
  From a PDE point of view this falls out naturally  when  variational formulation is considered
$$
\max(\min(\mathcal{L}V - c, V -\gamma x), V - K)=0,
$$
where the equation is in appropriate sense (weak or viscosity) and with the boundary values as above. It is however not our intention to discuss such aspects of this problem here, but rather pay attention to the qualitative behavior of the free boundary when (and if) it approaches the fixed  boundary $x=K/\gamma$. From the variational inequality approach--or any other for that matter one always obtains the fact that $\mathcal{L}V\geq c$ in $  D_T \cap \{V< K\}$.
 A very good source for  such problems from both Stochastic and Variational point of view is Avner Friedman's book \cite{Fr}, Chapter 16.

It should be remarked that in the above setting we did not assign boundary values to the characteristic boundary $x=0$, due to the 
 degeneracy of the equation at such points. Degenerate  points are treated 
by the problem as interior points and one can not assign boundary data; see \cite{Fich}, \cite{OR}, and \cite{KN}. 
Nevertheless, using the assumption (\ref{bdry-regul}), the value at $x=0$ can be computed directly from the equation, as we did above
(see (\ref{termination}))
$$
V(0,t)=K e^{-r (T-t)}+ \frac{c}{r} ( 1- e^{-r(T-t)}) \qquad   0\leq t \leq T.
$$
One can also treat the problem by considering a regularization of the equation in $\{-K/\gamma <x < K/\gamma ,\  0<t<T\} $ where in the set $\{-K/\gamma <x < 0,\  0<t<T\} $ we consider the evenly reflected coefficients of the operator, which amounts to 
 $$  -\partial_t -\frac{1}{2}\sigma^2 (x^2+ \epsilon) \partial_{xx} -(r-q) x \partial_x +r, $$
where $\epsilon >0$.

In this new setting the problem admits a solution (call it $W_\epsilon$) and by uniqueness and symmetry of the problem, the solution is symmetric in $x$-variable, and hence 
\begin{equation}\label{Wx}
\partial_x W_\epsilon (0,t)=0 .
\end{equation}
Now if we have enough regularity for solutions 
we may  let $\epsilon $ tend to zero to obtain a solution to our original problem. Indeed, such  uniform estimates for $W_\epsilon$ in compact sets of $D_T$ is true by classical theory for uniformly parabolic equations. This convergence is not obvious on the boundary $\{x=0 \} \cap \partial D_T$. In particular we {\it can not} claim that $\partial_x V(0,t)=0$, at least not without any further analysis. 
Property (\ref{Wx}) will be used in proving that the function $V$ is monotone increasing in $x$-variable but  grows slower than $\gamma x$. See Proposition \ref{T1}.

Another approach to existence of this type of  degenerate problem is the use of a mapping that send the origin to $-\infty$, so that the degeneracy appears at infinity point. Then by solving the problem in finite domains, and letting the domain enlarge to the whole space one obtains a solution.

\begin{assu} \label{as} Throughout the paper we shall assume certain conditions to be fulfilled. These conditions force the problem to behave correctly, in a certain sense that is crucial for the problem; e.g. standard maximum principle, uniqueness, compactness should be available. These conditions will in general lead us to existence, uniqueness and qualitative properties which are expected. It should be mentioned that, as we shall discuss it later, these assumptions fall natural for the problem from a financial standpoint. For parabolic PDE  we refer to  \cite{L} as a general background reference.

\begin{itemize}
\item   Call feature: The call feature, as we explained above, determines  the region $0<x<K/\gamma$ where the equations take place.

\item    We shall also assume  $c< rK$. This assumption forces the  value function $V$ to stay strictly below $K$ and hence the upper-obstacle never takes place in $D_T$. This can, indeed, be seen from  equation (\ref{1}). Since in the set $\{V=K\}\cap D_T$ we would then have $\mathcal{L}V=rK \leq c $ contradicting the assumption $c<rK$. It should however be remarked that this assumption does not affect the general results  in this paper since if $c\geq rK$ one
may consider the obstacle problem with upper obstacle only. When these ingredients vary in $(x,t)$ and $c-rK$ changes sign,   then the techniques in this paper has to be modified substantially.

  \item We shall further assume that $c<qK$ otherwise there would be no touch between the graph of $V$ and that of $\gamma x$, i.e., no
  free boundary.  This can be  seen from equation (\ref{lower-bound}) below. Indeed from  (\ref{lower-bound}) we  have $\Lambda \subset \{q\gamma x >c\}$ so for $x$ on  the free boundary we have this condition as well, and therefore $x>c/q\gamma$.
  On the other hand if $c>qK$ then $x>K/\gamma$ and therefore $x$ is  outside the region $D_T$.
   
\end{itemize}
\end{assu}
In most literatures there is an extra  assumption  $q\leq r$, which addresses the fact that if dividend rate are higher than interest rate then no body will invest in the bond market.

Since we are just dealing with the case in which $rK>c$, when $V$ is strictly smaller than $K$,
we may skip the upper obstacle in the formulation of the problem. Also the variational problem produces a supersolution to the PDE and therefore $\mathcal{L}V\geq c,$ holds in $ D_T$.
We thus   look into the following equations of variational inequalities
\begin{equation}
 \label{1}
\begin{cases}
\mathcal{L}V=c,  &D_T \cap  \{ \gamma x <V< K \}, \\
\mathcal{L}V\geq c,  &  D_T, \\
V(K/\gamma ,t)=K, & 0 \leq t \leq T, \\
V(x,T)=K,        &         0\leq x \leq K/\gamma ,\\
%V(0,t)=K e^{-r (T-t)}+ \frac{c}{r} ( 1- e^{-r(T-t)}) &   0\leq t \leq T . 
\end{cases}
\end{equation}
with a  smooth fit along the free boundary (see Proposition \ref{T1} for more explanation). In other words our solution to the above problem will be $C^{1,\alpha}_x \cap C_t^\alpha$, and the corresponding Sobolev spaces are $W_x^{2,p}\cap W_t^{1,p}$ for $1<p<\infty$. 

\begin{figure}[!ht]
     \includegraphics[scale=0.5] {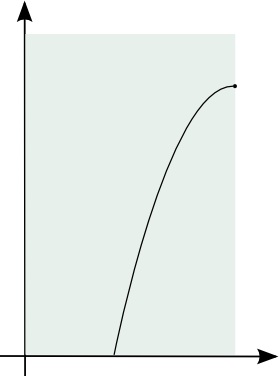}
\put(-110,125){${T}$}
\put(-98,144){$t$}
\put(0,5){$x$}
\put(-12,106){$t^*$}
\put(-23,-8){$K/\gamma$}
%\put(-195,135){$\pa B_1^+$}
%\put(-202,-10){$x_0=0$}
    \caption{ The free boundary arising from Problem (1).}
    \label{f1}
\end{figure}

An interesting way of thinking is the fact that Black-Scholes formula determines the value of the bond not only for the current and future times, but it can also evaluate the bond in the past, i.e.,  for $ - \infty <t <T$.

  In the following proposition the existence and the uniqueness of the solution is formulated and then we show the monotonicity and regularity in both $t$ and $x$ directions.

\begin{prop}\label{T1}
There exists a solution $V \in W^{2,p}_{x, loc}\cap W^{1,p}_{t, loc}\,$, $\, 1<p<\infty$ to Problem (\ref{1}).
The solution is unique in the class of all solutions with 
\begin{equation}\label{bdry-regul}
\lim_{x\to 0}x^2V_{xx}(x,t)=\lim_{x\to 0}xV_{x}(x,t)=0 ,
\end{equation}
and satisfies
\begin{equation}\label{monot-V}
0\leq   V_t , \qquad  0\leq  V_x\leq \gamma.
\end{equation}
Consequently, in this class we also have that the exercise region (if non-empty) is an epi-graph in both $x$ and $t$ directions. 
\end{prop}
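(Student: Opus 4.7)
The plan is to establish existence, monotonicity, uniqueness, and the epi-graph property in that order, since uniqueness and the geometry of $\Lambda$ rely on the monotonicity estimates.

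For \emph{existence}, I adopt the regularization strategy sketched just above the statement. Fix $\epsilon>0$ and pose the lower-obstacle problem on the symmetric slab $(-K/\gamma, K/\gamma)\times(0,T)$ with operator $\mathcal{L}_\epsilon = -\partial_t - \tfrac{1}{2}\sigma^2(x^2+\epsilon)\partial_{xx} - (r-q)x\partial_x + r$ (evenly reflected in $x$) and obstacle $\gamma|x|$. The problem is now uniformly parabolic, so classical penalization for parabolic obstacle problems (Friedman \cite{Fr}) yields a solution $W_\epsilon \in W^{2,p}_{x,loc}\cap W^{1,p}_{t,loc}$ with estimates uniform in $\epsilon$ on compact subsets. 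By symmetry of the data and operator, $W_\epsilon(\cdot,t)$ is even, which gives (\ref{Wx}). Passing to a subsequential limit as $\epsilon\to 0$ and restricting to $\{x>0\}$ produces $V$; the vanishing of $\partial_x W_\epsilon$ at $x=0$, combined with a Taylor expansion in $x$, transfers to the limit the boundary condition (\ref{bdry-regul}).

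For \emph{monotonicity}, I formally differentiate $\mathcal{L}V=c$ inside the continuation set. Setting $U:=V_t$, on this set $\mathcal{L}U=0$. I compute the parabolic-boundary values: at $t=T$ the spatial derivatives vanish, so the PDE yields $V_t|_{t=T}=rK-c>0$; along $x=K/\gamma$ the equality $V=K$ forces $V_t=0$; along the free boundary $\Gamma$ the relation $V=\gamma x$ is $t$-independent, giving $V_t=0$; and at $x=0$ the explicit formula (\ref{termination}) gives $V_t(0,t)=(rK-c)e^{-r(T-t)}>0$. A parabolic weak maximum principle then delivers $V_t\geq 0$. Applying the same scheme to $V_x$ (which satisfies a new parabolic equation obtained by $\partial_x$-differentiating $\mathcal{L}V=c$, with zero-order coefficient $q$) and to $\gamma-V_x$ delivers $0\leq V_x\leq \gamma$: smooth fit gives $V_x=\gamma$ on $\Gamma$; $V_x(\cdot,T)=0$; a difference-quotient argument using $\gamma x\leq V\leq K$ yields $0\leq V_x(K/\gamma^-,t)\leq\gamma$; and at $x=0$ one runs the argument for the uniformly parabolic approximants $W_\epsilon$ and invokes (\ref{bdry-regul}) when passing to the limit.

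For \emph{uniqueness and the epi-graph property}, take two solutions $V_1,V_2$ satisfying (\ref{bdry-regul}) and set $W:=V_1-V_2$. On the common continuation set $\mathcal{L}W=0$, and standard obstacle comparison excludes a positive interior maximum of $W$ in any coincidence region of either $V_i$. On the fixed boundary $\{x=K/\gamma\}\cup\{t=T\}$ one has $V_1=V_2$ by the imposed conditions; on the degenerate boundary $\{x=0\}$ the hypothesis (\ref{bdry-regul}) annihilates the $x^2V_{ixx}$ and $xV_{ix}$ terms in the PDE, pinning both $V_i(0,\cdot)$ to the value (\ref{termination}), and thus precludes a maximum of $W$ there. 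Hence $W\equiv 0$. The epi-graph property follows at once from the monotonicity: if $V(x_0,t_0)=\gamma x_0$, then $V_t\geq 0$ combined with $V\geq \gamma x_0$ forces $V(x_0,t)=\gamma x_0$ for all $t\leq t_0$, while $V_x\leq \gamma$ combined with $V\geq \gamma x$ gives $V(x,t_0)=\gamma x$ for all $x\geq x_0$.

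The main obstacle throughout is the degenerate boundary $x=0$: the operator loses ellipticity there, classical Dirichlet theory does not apply, and both the limit $\epsilon\to 0$ and the comparison arguments must be carried out inside the restricted class (\ref{bdry-regul}). Without that hypothesis, Fichera-type phenomena would allow non-unique boundary behavior at $x=0$ and would spoil the maximum-principle step of the monotonicity argument precisely where it is most delicate.
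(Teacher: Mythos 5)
Your overall architecture (regularization for existence, maximum principles for monotonicity, comparison for uniqueness, and the epi-graph property as a corollary of (\ref{monot-V})) matches the paper's, and your treatments of existence, of $0\leq V_x\leq\gamma$ via the penalized approximants $W_\epsilon$, and of the epi-graph deduction are essentially what the paper does. The one step where you genuinely diverge is the proof of $V_t\geq 0$, and there your argument has a real gap. You differentiate the equation in $t$ and apply the maximum principle to $U=V_t$ on the continuation region, which forces you to prescribe $U$ on the free boundary $\Gamma$; you assert $V_t=0$ there because ``$V=\gamma x$ is $t$-independent''. That only gives $V_t=0$ in the interior of the coincidence set $\Lambda$. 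To transfer this value to the lateral boundary of the continuation region you would need $V_t$ to be continuous across $\Gamma$, or $\Gamma$ to be a differentiable graph so that smooth fit lets you compute the tangential derivative; neither is available at this stage, since the solution is only $W^{1,p}_t$ (and ultimately only $C^{0,1}_t$ by Theorem \ref{T3}, which is the regularity threshold), and no regularity of $\Gamma$ has been established. The paper avoids this entirely by a sliding argument: it compares the translate $\tilde V_\alpha(x,t)=\tilde V(x,t+\alpha)$ with $\tilde V$ on $D_{T-\alpha}$ using the comparison principle for the variational inequality (the coefficients are $t$-independent, so the translate solves the same problem), which works at the level of $V$ itself and needs no derivative data on $\Gamma$. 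Your step can be repaired either by switching to this sliding comparison or by differentiating the \emph{penalized} equation in $t$ (where $\beta_\epsilon'\geq 0$ keeps the zeroth-order coefficient nonnegative and only fixed-boundary data are needed), but as written it fails.

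Two smaller remarks. Your uniqueness paragraph invokes ``$\mathcal{L}W=0$ on the common continuation set,'' which does not by itself control the regions where exactly one of the $V_i$ sits on the obstacle; the clean version, which is the paper's, is to work on $\Omega=\{V_1>V_2\}$, where automatically $V_1>\gamma x$ so that $\mathcal{L}V_1=c$ while $\mathcal{L}V_2\geq c$, and then the comparison principle on $\Omega$ (with coinciding boundary data, the value at $x=0$ being pinned by (\ref{bdry-regul})) yields the contradiction. By contrast, your boundary datum for $V_x$ on $\Gamma$, namely $V_x=\gamma$ by smooth fit, is legitimate: $V\in W^{2,p}_x$ embeds in $C^{1,\alpha}_x$, so $V_x$ is continuous across $\Gamma$ --- which is precisely why the $x$-derivative admits the maximum-principle treatment while the $t$-derivative does not.
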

Although the statements of this theorem is well-known to experts, and is considered as "common property", we shall sketch a proof of  it in Section \ref{pr}. It is worth mentioning that in most known literatures the assumption (\ref{bdry-regul}) is not taken into account and this may 
cause serious problems. This assumption, however, can be relaxed to boundedness and then uniqueness will be in the class of variational solutions.

An important feature for convertible bonds is the presence of the free boundary, and the so-called exercise region $\Lambda$. If one chooses the ingredients in the problem in a way that conversion is never  optimal  
 then this falls under standard theory of bonds. We shall avoid such a discussion here, but  we assume from now on that
\begin{equation}\label{Lambda}
  \Lambda\neq \emptyset .
\end{equation}

Once we agree upon condition  (\ref{Lambda}) then we need to make sure that the touching point between the free boundary and the fixed boundary $t=t^*$ exists as well as it 
does not come too close to the termination point $t=T$.

\begin{prop}\label{pro} In Problem (\ref{1}), there is $t^* \in (-\infty, T)$ such that the free boundary $\Gamma=\partial\{V>\gamma x\}$, hits the fixed boundary only at  $(K/\gamma,t^*)$
and at no other fixed boundary point.
\end{prop}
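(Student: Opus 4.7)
My plan is to separate the claim into three pieces: (i) showing $t^*<T$, (ii) showing $t^*>-\infty$, and (iii) showing that $(K/\gamma,t^*)$ is the only place where $\Gamma$ touches $\partial D_T$. Pieces (ii) and (iii) follow fairly directly from Proposition \ref{T1}. The monotonicity $V_t\geq 0$ and $V_x\leq \gamma$ forces the coincidence set $\Lambda$ to be of the form $\{x\geq\phi(t)\}$ with $\phi$ non-decreasing, so $\phi$ can cross the level $K/\gamma$ at only one time, which gives uniqueness; and Assumption (\ref{Lambda}) supplies a point $(x_0,t_0)\in\Lambda$ with $x_0<K/\gamma$, so that $\phi(t_0)\leq x_0<K/\gamma$ and hence $t^*\geq t_0>-\infty$. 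Evaluating the obstacle inequality in $\Lambda$ (where $\mathcal L(\gamma x)=q\gamma x$) produces $\phi\geq c/(q\gamma)>0$ by Assumption \ref{as}, so $\Gamma$ is strictly separated from the left fixed boundary $\{x=0\}$; together with (i) this will show that the only contact of $\Gamma$ with $\partial D_T$ is at the single point $(K/\gamma,t^*)$.

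The substantive part is (i). The plan is to compare $V$ with the unobstructed value. Let $V^*$ solve the linear Cauchy--Dirichlet problem $\mathcal L V^*=c$ in $D_T$ with the same boundary data as $V$ on $\{t=T\}\cup\{x=K/\gamma\}$; at $x=0$, both functions are forced by the degenerate equation together with the regularity assumption (\ref{bdry-regul}) to take the common value (\ref{termination}). The difference $u=V-V^*$ then satisfies $\mathcal L u\geq 0$ in $D_T$ with $u=0$ on the parabolic boundary, so the parabolic maximum principle gives $V\geq V^*$ in $D_T$. It therefore suffices to prove $V^*>\gamma x$ on a punctured one-sided neighbourhood of the corner $(K/\gamma,T)$.

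Write $y=K/\gamma-x$, $\tau=T-t$, and $G=K-V^*$. Then $G\geq 0$ solves $\mathcal L G=rK-c>0$ in $D_T$ with $G=0$ on the two corner segments $\{y=0\}$ and $\{\tau=0\}$, whose data are compatible at the corner. The key step is the local bound
\[
 G(y,\tau)\;\leq\;\varepsilon\,y\qquad\text{for }0\leq y\leq\delta_\varepsilon,\ 0\leq\tau\leq\eta_\varepsilon ,
\]
valid for every $\varepsilon>0$ on a sufficiently small rectangle. This follows from parabolic boundary regularity at the compatible corner together with the quantitative estimate $G_y(0,\tau)=O(\sqrt{\tau})\to 0$ as $\tau\to 0^+$, which one reads off from the Duhamel representation of the linearized problem (equivalently, from comparison with the one-dimensional heat semigroup on a half-line). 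Choosing $\varepsilon<\gamma$ then gives $V^*-\gamma x=\gamma y-G\geq(\gamma-\varepsilon)y>0$ on the punctured rectangle, so $V\geq V^*>\gamma x$ there; this forces $\phi(t)=K/\gamma$ on $(T-\eta_\varepsilon,T)$ and hence $t^*\leq T-\eta_\varepsilon<T$. The hardest part of the plan is precisely this boundary-regularity estimate at the parabolic corner; once it is available, the remaining pieces are routine applications of comparison and the monotonicity already in hand from Proposition \ref{T1}.
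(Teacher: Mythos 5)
Your proposal is correct, and pieces (ii)--(iii) essentially coincide with the paper's argument: the epigraph structure from Proposition \ref{T1} together with the computation $\mathcal{L}(\gamma x)=q\gamma x$, which yields $\Lambda\subset\{x\geq c/(q\gamma)\}$ as in (\ref{lower-bound}), is exactly how the paper separates $\Gamma$ from $\{x=0\}$ and pins down a single contact time on $\{x=K/\gamma\}$. Where you genuinely diverge is the main step $t^*<T$. The paper argues softly: it invokes continuity of $\partial_x V$ up to the closed corner $(K/\gamma,T)$ (a regularity assertion for the obstacle problem itself) and derives a contradiction from the two limiting values $\partial_x V=0$ (along $t=T$, where $V\equiv K$) and $\partial_x V=\gamma$ (from the coincidence set accumulating at the corner). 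You instead compare $V$ from below with the unobstructed solution $V^*$ and show $V^*>\gamma x$ near the corner via the estimate $G=K-V^*\leq\varepsilon y$. Your route only requires boundary regularity of a \emph{linear} problem rather than of the obstacle problem at the corner, it makes explicit where the standing assumption $rK>c$ enters, and it yields a quantitative lower bound on $T-t^*$ in terms of $\gamma$ and $rK-c$, parallel to the paper's estimate (\ref{t*}); the paper's route is shorter but leans on an unproved corner-regularity claim for $V$. Two points to tighten, neither of which is a gap: the corner data for $G$ is only $C^0$-compatible, not $C^1$-compatible (on $\{\tau=0\}$ the equation forces $\partial_\tau G=rK-c>0$, while on $\{y=0\}$ one has $\partial_\tau G=0$), which is precisely why the correct rate is $G_y=O(\sqrt{\tau})$ rather than $O(\tau)$ --- so ``compatible corner'' is a misnomer, though your conclusion survives since all that is needed is $G_y\to 0$; and before integrating to obtain $G\leq\varepsilon y$ you need the bound $|G_y(\xi,\tau)|\leq C\sqrt{\tau}$ uniformly for $0\leq\xi\leq\delta$, not only at $\xi=0$ (the half-line Duhamel representation does supply this uniformity, and it is stable under the smooth, non-degenerate perturbation of coefficients near $x=K/\gamma$).
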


To not digress from the discussion, we shall prove this proposition later in Section (\ref{pr}).

\begin{rem}
In Problem (\ref{1}), $V_x(K/\gamma,T)=0$ and $V_x(K/\gamma,t^*)=\gamma$. Moreover $V_x \in C_t ^\alpha(\overline{D}_T \setminus \{x=0\})$ (see Theorem \ref{T3}), hence for every $x$ there is a $c_0$, such that
$$\mid V_x(x,t^*)-V_x(x, T) \mid \leq c_0 (T-t^*)^\alpha.$$
From here we deduce that there is a certain distance between $t^*$ (in Proposition \ref{pro}) and the maturity date, more exactly
\begin{equation}\label{t*}
T-t^*\geq (\frac{\gamma}{c_0})^{\frac{1}{\alpha}}.  
\end{equation}
 \end{rem}

\section{Main Results}\label{main}

In this section we shall state our main results concerning qualitative behavior of the solution to Problem (\ref{1}) and the free boundary arising from it. The proofs are also gathered in the next section.

The following theorem states the optimal smoothness of the solution to Problem (\ref{1}).

\begin{theo} \label{T3}The solution $V$ to Problem (\ref{1}) is uniformly $C_x ^{1,1} \cap C_t^{0,1}$ in  $\overline{D}_T \setminus \{x=0\}$.
\end{theo}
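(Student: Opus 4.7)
The plan is to reduce Problem (\ref{1}) to a standard parabolic obstacle problem with bounded right-hand side, apply the classical interior regularity theory on the uniformly parabolic region $\{x\geq\delta\}$, and then handle separately the corner $(K/\gamma,t^*)$ where the free boundary meets the fixed boundary. Setting $u:=V-\gamma x$, one checks that $u\geq 0$, $u(K/\gamma,t)=0$, $u(x,T)=K-\gamma x\geq 0$, and, since $\mathcal{L}(\gamma x)=q\gamma x$,
$$
\mathcal{L}u=\mathcal{L}V-\mathcal{L}(\gamma x)=c-q\gamma x\qquad\text{in }\{u>0\},
$$
while $u\equiv 0$ on the coincidence set, so $u_x=u_{xx}=u_t=0$ there. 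The source $f(x):=c-q\gamma x$ is smooth and bounded, and on every strip $\{x\geq\delta\}$ the operator $\mathcal{L}$ is uniformly parabolic with smooth coefficients; this places us in the classical framework of parabolic obstacle problems.

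For the interior $C^{1,1}_x$-bound, I would invoke the classical $C^{1,1}_x$-regularity theorem for solutions of parabolic obstacle problems with bounded right-hand side (as presented in Friedman \cite{Fr}): in any sub-cylinder compactly contained in $D_T\cap\{x\geq\delta\}$ one obtains $\|u_{xx}\|_\infty\leq C$. On the smooth portions of the parabolic boundary, namely $\{t=T,\ 0<x<K/\gamma\}$ and $\{x=K/\gamma,\ t\neq t^*\}$, the Dirichlet datum $V\equiv K$ is smooth, and Proposition \ref{pro} guarantees that the coincidence set stays a positive distance away from these portions. Hence the linear PDE $\mathcal{L}V=c$ holds in a neighborhood of each such point with smooth boundary data, and standard parabolic Schauder estimates give $C^{2,\alpha}$ regularity up to the boundary there.

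The main obstacle is the corner $(K/\gamma,t^*)$, where the Dirichlet constraint $V=K$ and the obstacle contact $V=\gamma x$ are simultaneously active. Neither the interior obstacle theory nor standard boundary Schauder theory applies directly, because the boundary datum $K$ matches the obstacle $\gamma x$ precisely at this point. My approach would be to build one-sided parabolic barriers in a small cylinder $Q_\rho^-(K/\gamma,t^*)$, exploiting the smooth-fit identities $V_x(K/\gamma,T)=0$ and $V_x(K/\gamma,t^*)=\gamma$ from the Remark together with the monotonicity $0\leq V_x\leq\gamma$ and $V_t\geq 0$ of Proposition \ref{T1}. Concretely, one compares $u$ from above with the solution of the unconstrained backward parabolic problem with compatible boundary data (yielding $u\leq C(K/\gamma-x)^2+C(T-t)$) and from below trivially by $0$. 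Combining these localized bounds with a blow-up/rescaling argument at the corner—of the same flavor as those used elsewhere in the paper—promotes the interior estimate to a uniform bound $|u_{xx}|\leq C$ on a full parabolic neighborhood of the corner. I expect this corner analysis to be the real difficulty of the theorem.

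Once $|V_{xx}|$ is uniformly bounded on $\overline{D}_T\setminus\{x=0\}$, the temporal Lipschitz bound follows from the PDE itself. In the continuation region
$$
V_t=-\tfrac{1}{2}\sigma^2x^2V_{xx}-(r-q)xV_x+rV-c,
$$
and in the coincidence set $V=\gamma x$ is time-independent so $V_t=0$. Since $0\leq x\leq K/\gamma$, $V\leq K$ and $0\leq V_x\leq\gamma$ by Proposition \ref{T1}, the right-hand side is uniformly bounded; combined with $V_t\geq 0$ this yields the uniform $C^{0,1}_t$-bound on $\overline{D}_T\setminus\{x=0\}$ and completes the proof.
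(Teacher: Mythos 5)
Your overall scaffolding (interior obstacle-problem regularity, Schauder near the smooth boundary portions, a separate argument at the corner, and recovering the $t$-Lipschitz bound from the equation once $|V_{xx}|$ is bounded) is reasonable, and the reduction $u=V-\gamma x$ with $\mathcal{L}u=c-q\gamma x$ in $\{u>0\}$ is correct. But there is a genuine gap exactly where you yourself locate ``the real difficulty'': the corner $(K/\gamma,t^*)$ is not actually handled. The barrier you propose would give $u\leq C(K/\gamma-x)^2+C(T-t)$, which does not vanish as $(x,t)\to(K/\gamma,t^*)$ (the term $C(T-t)\to C(T-t^*)>0$), so it yields no growth control at the corner at all; what is needed is the quadratic decay $\sup_{Q'_\rho(X^0)}u\leq C_0\rho^2$ at \emph{every} free boundary point $X^0$, uniformly up to the fixed boundary, measured in the parabolic distance from $X^0$ itself. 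The phrase ``a blow-up/rescaling argument of the same flavor as those used elsewhere in the paper'' is precisely the content that has to be proved here, and it is the entire technical core of the theorem: the paper establishes the dyadic decay (\ref{S_j}) by contradiction, rescales by $S_{j_n+1}$, identifies the limit $v_0\geq 0$ with $v_0(0,0)=0$ and $\partial_xv_0(0,0)=0$ solving a caloric equation, and excludes it via the strong minimum principle when the blow-up domain contains the origin in its interior and via the Hopf boundary point lemma when the origin lands on $\partial B_\infty$ --- the latter alternative being exactly the mechanism that covers free boundary points approaching $x=K/\gamma$. Without this (or an equivalent substitute), your interior estimates degenerate as one approaches the touching point and the claimed uniform $C^{1,1}_x$ bound on $\overline{D}_T\setminus\{x=0\}$ does not follow. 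The paper then converts the quadratic growth into the $C^{1,1}_x\cap C^{0,1}_t$ bound by rescaling at scale $d^-(X,\Gamma)$ and applying interior Schauder (Lemma \ref{lem2}); your PDE argument for $V_t$ is an acceptable alternative for that last step, but only after $|V_{xx}|$ is uniformly controlled.

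A secondary inaccuracy: you assert that Proposition \ref{pro} keeps the coincidence set a positive distance from $\{x=K/\gamma,\ t\neq t^*\}$ so that $\mathcal{L}V=c$ holds in a neighborhood of such points. That is false for $t<t^*$: since $V_x\leq\gamma$ forces $V-\gamma x$ to be nonincreasing in $x$ and it vanishes at $x=K/\gamma$, the coincidence set at each time $t<t^*$ is a full interval $[b(t),K/\gamma]$ abutting the fixed boundary; only the \emph{free boundary} $\partial\{V>\gamma x\}$ stays away from $x=K/\gamma$ there. The conclusion survives because $V\equiv\gamma x$ is trivially smooth in that region, but the stated reasoning is wrong and should be repaired.
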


Finally we state a theorem, which shows that the free boundary is situated outside the set $\left\{(x,t), t> -\alpha |x-K/\gamma|^2+t^* \right\} \bigcap D_T$. In other words, it is located under any arbitrary downward parabola at the point $(K/\gamma,t^* )$  and after scaling it tends to the line $x=K/\gamma$.

 \begin{theo}\label{T4}
 The free boundary $\Gamma=\partial \{V>\gamma x\}$ in Problem (\ref{1}), is  uniformly parabolically tangential to the fixed boundary at
  $X^*=(K/\gamma, t^*)$. In other words there is a modulus of continuity $\sigma$ ($\sigma(0^+)=0$) and  an $r_0$  such that
$$\Gamma \cap Q_{r_0}(X^*) \subset \left\{(x,t) : t-t^*\leq -\frac{|x-K/\gamma|^2}{\sigma(|x-K/\gamma|)} \right\}.$$
\end{theo}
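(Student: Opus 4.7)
The plan is to reduce the problem to a one-phase parabolic obstacle problem and then run a blow-up/compactness argument near the contact point $X^*$.

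Set $u := V - \gamma x$, so that $u \geq 0$, $u(K/\gamma, t) = 0$, and $\mathcal{L}u = c - q\gamma x$ on $\{u > 0\}$, which is strictly negative near $x = K/\gamma$ by the standing assumption $c < qK$. From Proposition \ref{T1} one has $u_t \geq 0$ and $u_x \leq 0$, and the free boundary is $\Gamma = \partial\{u > 0\}$. The goal is to show that the coincidence set $\{u = 0\}$ near $X^*$ is confined to a region strictly steeper, in parabolic scale, than any downward parabola tangent to $\{x = K/\gamma\}$ at $X^*$.

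I would argue by contradiction. If the conclusion fails, there exist $c_0 > 0$ and free-boundary points $X_j = (x_j, t_j) \in \Gamma$ with $X_j \to X^*$ and $t^* - t_j \leq c_0 (K/\gamma - x_j)^2$. Set $d_j := K/\gamma - x_j > 0$ and $s_j := (t_j - t^*)/d_j^2 \in [-c_0, 0]$, and form the parabolic blow-ups
\[
u_j(y, s) := \frac{1}{d_j^2}\, u\bigl(K/\gamma + d_j y,\; t^* + d_j^2 s\bigr), \qquad y \leq 0.
\]
Theorem \ref{T3} supplies uniform $C^{1,1}_x \cap C^{0,1}_t$ bounds, so along a subsequence $u_j \to u_\infty$ locally uniformly (and in $W^{2,p}_{\mathrm{loc}} \cap W^{1,p}_{\mathrm{loc}}$) on the half-space $H := \{y \leq 0\} \times \mathbb{R}$. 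The drift and zeroth-order terms of $\mathcal{L}$ pick up an extra factor $d_j$ and vanish in the limit, the diffusion coefficient tends to $a_0 := \tfrac{1}{2}\sigma^2 (K/\gamma)^2$, and the source tends to $f_\infty := c - qK < 0$. Therefore $u_\infty$ solves
\[
u_\infty \geq 0,\quad u_\infty(0, s) = 0,\quad \partial_s u_\infty \geq 0,\quad -\partial_s u_\infty - a_0 \partial_{yy} u_\infty = f_\infty \text{ on } \{u_\infty > 0\},
\]
and $u_\infty$ carries a free-boundary point at $(-1, s_\infty)$ with $s_\infty \in [-c_0, 0]$.

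The main obstacle is classifying such a global blow-up. The equation together with $\partial_s u_\infty \geq 0$ forces $a_0 \partial_{yy} u_\infty \geq |f_\infty| > 0$ on $\{u_\infty > 0\}$, so each time slice is strictly convex there. The Dirichlet condition $u_\infty(0,s)=0$ rules out the obvious time-independent candidates with interior coincidence: the only time-independent solutions of $a_0 v'' = |f_\infty|$ with $v \ge 0$ and $v(0)=0$ are $v(y)=\frac{|f_\infty|}{2a_0}y^2 + by$ with $b \le 0$, all strictly positive on $\{y<0\}$. Hence the expected blow-up is the pure quadratic $u_\infty(y,s) = \frac{|f_\infty|}{2a_0}\,y^2$, which carries no free boundary and so contradicts the existence of $(-1,s_\infty)$. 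The delicate step is to rule out genuinely time-dependent blow-ups whose $s \to -\infty$ limit vanishes but which develop interior coincidence at some finite $s$; here a Hopf-type barrier argument, combined with the $t$-monotonicity and the content of Proposition \ref{pro} (which after rescaling prevents the free boundary of $u_\infty$ from approaching $\{y=0\}$ away from the origin), should force $u_\infty$ to be time-independent and hence equal to the quadratic above. A standard quantitative repackaging of this compactness argument (``no uniform modulus implies existence of a bad sequence'') then produces the modulus $\sigma$ and radius $r_0$ asserted in the theorem.
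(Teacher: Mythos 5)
Your overall strategy --- argue by contradiction at $X^*$, perform a parabolic blow-up, use the $C^{1,1}_x\cap C^{0,1}_t$ bound of Theorem \ref{T3} for compactness, and classify the resulting global solution --- is exactly the route the paper takes in Lemma \ref{cone}. However, the step you yourself flag as ``delicate'', namely proving that the blow-up limit $u_\infty$ is time-independent, is the entire mathematical content of the classification, and you leave it as a gesture toward ``a Hopf-type barrier argument''. The paper does not prove this from scratch either: it reduces the limit operator to the heat operator and invokes Theorem II of \cite{ASU2}, which classifies global solutions of the parabolic obstacle problem with quadratic growth. Without that citation or a substitute argument, the proof does not close.

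There are also concrete problems in the part you do carry out. First, a sign error: from $-\partial_s u_\infty - a_0\partial_{yy}u_\infty = f_\infty$ with $f_\infty<0$ and $\partial_s u_\infty\geq 0$ one gets $a_0\partial_{yy}u_\infty = |f_\infty| - \partial_s u_\infty \leq |f_\infty|$, not $\geq |f_\infty|$, so the claimed convexity of the time slices does not follow. Second, and more seriously, your classification of time-independent candidates is incomplete: besides $v(y)=\frac{|f_\infty|}{2a_0}y^2+by$ with $b\le 0$, the function $v(y)=\frac{|f_\infty|}{2a_0}\bigl((y_0-y)^+\bigr)^2$ with $y_0<0$ is also a nonnegative stationary global solution with $v(0)=0$, and it carries a genuine free boundary point at $y_0$; so merely exhibiting a free boundary point at $(-1,s_\infty)$ in the limit is not a contradiction. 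What excludes this profile is non-degeneracy (Lemma \ref{nd}) applied at the contact point itself: $X^*$ lies in $\overline{\{u>0\}}$, so the bound $\sup_{Q^-_\rho}u\geq c_0\rho^2$ survives the rescaling and forces the positivity set of $u_\infty$ to reach the origin, which kills every candidate whose coincidence set contains a neighborhood of $\{y=0\}$. Non-degeneracy is also needed to upgrade ``$u_\infty(-1,s_\infty)=0$'' to ``$(-1,s_\infty)$ is a free boundary point of $u_\infty$''. You never invoke Lemma \ref{nd}, whereas the paper's proof uses it explicitly at exactly this point; without it the contradiction does not materialize.
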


The reader should notice that Theorem \ref{T4} implies that a parabolic scaling $(sx ,s^2t)+ X^* $ of the free boundary at $X^*$ gives that the limiting free boundary 
$\Gamma_0=\lim_s \Gamma_s$ (where $\Gamma_s$ is the scaled free boundary) will coincide with the $t$-axis.

 \begin{rem} 

  \begin{itemize}
\item   It is notable that the free boundary arising from the American option behaves in a completely different way, i.e., it is located above all arbitrary parabolas and after scaling it tends to the spatial axis.

\item A final remark concerning Theorem \ref{T4} is that here we have  intentionally avoided to discuss the regularity of the free boundary due to  technical reasons. Indeed, one expects the free boundary in this case to be smooth (up to $C^\infty$) but a proof of this "fact" needs detailed analysis and blow-up techniques. There are several papers treating regularity of the free boundary 
  for the american put/call option and it would be likely that these methods apply here, even though not directly.
  Such a sketch of ideas is presented in \cite{PS}.

\item We want to further remark that   all estimates and statements in our main results, hold in a uniform fashion. More exactly the constants 
in our theorems above depend only on the norms of the ingredients and not the ingredients themselves. Nevertheless we shall only do the proof for one solutions and not a general class of solutions, as it would require more definitions and complications.

\end{itemize}
\end{rem}

\section{Restatement of the problem and technical tools}
In this section we shall make change of variables so that the main equation falls under general theory of parabolic PDE.
We shall also state some general facts from the standard theory of the free boundary regularity, for parabolic equations (see \cite{PS}, \cite{ASU1}, and \cite{ASU2}).\\
\begin{dis}\label{dis}
For the sake of simplicity first we translate $V$ to $\tilde{V}$ by
$$\tilde{V}(x,t)=V(-x+K/\gamma,T-t).$$
Inserting this in equation (\ref{1}), gives
\begin{small}
\begin{equation}
 \label{2}
\begin{cases}
\mathcal{\tilde{L}}\tilde{V}=c,  &D_T \cap  \{ -\gamma x+K <\tilde{V}  \} , \\
\mathcal{\tilde{L}}\tilde{V}\geq c,  &  D_T, \\
\tilde{V}(0 ,t)=K, & 0 \leq t \leq T, \\
\tilde{V}(x,0)=K,        &         0\leq x \leq K/\gamma ,\\
\end{cases}
\end{equation}
\end{small}
where $\mathcal{\tilde{L}}$ is the following operator
\begin{equation}\label{L-operator}
\mathcal{\tilde{L}}=\partial_t-\frac{1}{2}\sigma^2(-x+K/\gamma)^2\partial_{xx}+(r-q)(-x+K/\gamma)\partial_{x}+r .
\end{equation}
One may also derive from (\ref{termination})
\begin{equation}\label{x=0}
\tilde{V}(K/\gamma ,t)=K e^{-r t}+ \frac{c}{r} ( 1- e^{-rt}),   \qquad         0\leq t \leq T.
\end{equation}
The obstacle should also be transformed and it becomes $-\gamma x+K$. Now for
$$u(x,t)=\tilde{V}(x,t)+\gamma x-K,$$
we have
$$\mathcal{\tilde{L}}u=c+(r-q)(-x+K/\gamma)\gamma+r(\gamma x-K)=c-q(-\gamma x+K),$$
 in the set $\{0<u<\gamma x\}.$
Moreover $u$ satisfies the following problem
\begin{equation}
\label{3}
\begin{cases}
\mathcal{\tilde{L}}u=c-q(-\gamma x+K),  & D_T \cap \{ 0<u  \} , \\
\mathcal{\tilde{L}}u\geq c-q(-\gamma x+K),  &  D_T, \\
u(0 ,t)=0, & 0 \leq t \leq T, \\
u(x,0)=\gamma x,        &         0\leq x \leq K/\gamma .\\
\end{cases}
\end{equation}
One also observes that from (\ref{x=0}) we have
$$
u(K/\gamma , t)= K e^{-r t}+ \frac{c}{r} ( 1- e^{-rt}),  \qquad         0\leq t \leq T .
$$

We also define a  scaled operator
\begin{equation}\label{scale-op}
\mathcal{\tilde{L}}_{s,X^0}:=\partial_t-\frac{1}{2}\sigma^2(-(sx+x^0)+K/\gamma)^2\partial_{xx}+s(r-q)(-(sx+x^0)+K/\gamma)\partial_{x}+s^2r,
\end{equation}
and its corresponding  scaled function
$$v(x,t)=\frac{u(sx+x^0,s^2t+t^0)}{A_s},$$
at the point $X^0=(x^0,t^0)$, for some fixed $s$, also in the forthcoming proofs, $A_s$ will either be $\sup_{Q_s} u$ or $s^2$, dependent on the situation.
One can easily verify that
\begin{equation}\label{scale-eq}
\mathcal{\tilde{L}}_{s,X^0} v(x,t)=\frac{s^2}{A_s}\,(\mathcal{\tilde{L}}u) (sx+x^0,s^2t+t^0).
\end{equation}
\end{dis}
\begin{figure}[!ht]
     \includegraphics[scale=0.5] {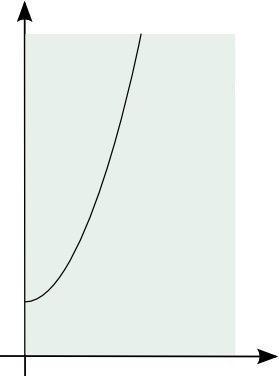}
\put(-127,25){$T-t^*$}
    \caption{ The free boundary after transformation}
    \label{f1}
\end{figure}

\begin{lem}\label{lem1} The solution $u$ to Problem (\ref{3}) satisfies
\begin{equation}\label{transf-eq1}
\mathcal{\tilde{L}}(u)=(c- q(-\gamma x + K))\chi_{\{u>0\}} \qquad \hbox{in } D_T,
\end{equation}
along with the boundary conditions
\begin{equation}\label{transf-eq2}
u(0,t)=0 \quad \hbox{for } 0\leq t \leq T,  \qquad u(x,0)=x\gamma  \quad \hbox{for } 0\leq x \leq K/\gamma.
\end{equation}
\end{lem}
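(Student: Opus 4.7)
The plan is to show that the variational-inequality formulation in Problem \eqref{3} can be recast as a single PDE with source given by a characteristic function. Since Proposition \ref{T1} provides a solution $u$ in the class $W^{2,p}_{x,\text{loc}}\cap W^{1,p}_{t,\text{loc}}$ for every $1<p<\infty$, all derivatives appearing in $\mathcal{\tilde L}$ are well defined almost everywhere, and it is enough to verify the identity a.e.\ separately on $\{u>0\}$ and on the coincidence set $\{u=0\}$.

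First I would consider the open set $\{u>0\}\cap D_T$. Here Problem \eqref{3} directly gives the equality $\mathcal{\tilde L}u = c-q(-\gamma x+K)$, while $\chi_{\{u>0\}}=1$, so the claimed identity holds pointwise. The boundary conditions $u(0,t)=0$ and $u(x,0)=\gamma x$ are inherited verbatim from \eqref{3}, so nothing further is needed for them.

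Next I would turn to the coincidence set $\{u=0\}\cap D_T$, where I must show $\mathcal{\tilde L}u=0$ almost everywhere. The idea is the classical fact that a Sobolev function vanishing on a set has all its weak derivatives vanishing a.e.\ on that set: since $u\in W^{2,p}_x\cap W^{1,p}_t$ and $u\ge 0$, the minimum value $0$ is attained on $\{u=0\}$, and Stampacchia's theorem (together with its parabolic counterpart) yields $u_x=u_t=0$ and $u_{xx}=0$ a.e.\ on $\{u=0\}$. Consequently, looking at the explicit form of $\mathcal{\tilde L}$ in \eqref{L-operator}, every term — including the zeroth-order $ru$ — vanishes a.e.\ on $\{u=0\}$, and since $\chi_{\{u>0\}}=0$ there as well, the identity holds.

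No serious obstacle is expected; the only delicate point is the appeal to the a.e.\ vanishing of second derivatives on a level set, which requires that $u$ has the Sobolev regularity asserted in Proposition \ref{T1}. Combining the two cases, the identity $\mathcal{\tilde L}u=(c-q(-\gamma x+K))\chi_{\{u>0\}}$ holds a.e.\ in $D_T$, which, by the regularity of $u$, is the strong sense in which \eqref{transf-eq1} is to be interpreted; together with the boundary data \eqref{transf-eq2} carried over from \eqref{3}, this completes the argument.
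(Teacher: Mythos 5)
Your argument is correct and is exactly the standard reasoning the paper's one-line proof alludes to: on the open set $\{u>0\}$ the equation is immediate from \eqref{3}, and on the coincidence set $\{u=0\}$ the a.e.\ vanishing of $u_t$, $u_x$, $u_{xx}$ (and of the zeroth-order term $ru$) for a $W^{2,p}_x\cap W^{1,p}_t$ function forces $\mathcal{\tilde L}u=0$ a.e., matching the vanishing right-hand side. The paper simply states that the lemma ``follows easily'' from this Sobolev regularity and the equations in \eqref{3}; you have supplied the details it omits, with no gap.
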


\proof 
The proof follows now easily due to smoothness of the function $u \in W_x^{2,p}\cap W_t^{1,p}$ locally in $D_T$, and the equations in (\ref{3}). \qed

\begin{lem} \label{L1} (Non-degeneracy for parabolic equations) \label{nd} Let $u$ be a solution of Problem (\ref{transf-eq1})-(\ref{transf-eq2}), then
$$\sup_{Q^-_{\rho}(X^0)} u \geq  c_0 \rho^2 +u(X^0), \,\,\, \hbox{for }  0<\rho < K/\gamma,$$
for all $X^0 \in \overline{\{u>0\}}$.
\end{lem}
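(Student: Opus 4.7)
The plan is a Caffarelli-type barrier argument for non-degeneracy in parabolic obstacle problems, adapted to the operator $\mathcal{\tilde{L}}$. By continuity of $u$, it suffices to prove the estimate for $X^0 = (x^0, t^0) \in \{u > 0\}$; the case $X^0 \in \partial\{u > 0\}$ then follows by approximation. I would fix such an $X^0$ and, assuming first that $Q^-_\rho(X^0) \subset D_T$ (the remaining case being handled by the explicit boundary data (\ref{transf-eq2})), look for a barrier $\phi = \phi_{X^0,\rho}$ satisfying (i) $\phi(X^0) = 0$, (ii) $\phi \geq c_0\rho^2$ on $\partial_p Q^-_\rho(X^0)$ for a universal $c_0 > 0$, and (iii) $\mathcal{\tilde{L}}\phi \geq \mathcal{\tilde{L}} u = c - q(K - \gamma x)$ on $Q^-_\rho(X^0) \cap \{u > 0\}$.

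The natural candidate is $\phi(X) = a|x-x^0|^2 + b(t^0 - t)$, whose action under $\mathcal{\tilde{L}}$ (using (\ref{L-operator})) reads
$$\mathcal{\tilde{L}}\phi = -a\sigma^2(K/\gamma-x)^2 + 2a(r-q)(K/\gamma-x)(x-x^0) + ar(x-x^0)^2 - b + rb(t^0-t).$$
Tuning $a$ and $b$ against $\sup_{D_T}|\mathcal{\tilde{L}} u|$, and possibly supplementing with an exponential correction using the fact that $e^{r(t^0-t)}$ lies in the kernel of $\mathcal{\tilde{L}}$, one can force (iii) to hold. With such a $\phi$ in hand, $w := u - \phi$ is a subsolution of $\mathcal{\tilde{L}}$ on $Q^-_\rho(X^0) \cap \{u > 0\}$; after the standard substitution $w \mapsto e^{-rt} w$ to remove the positive zero-order term $r$, the parabolic maximum principle applies and $w$ attains its maximum on $\partial(Q^-_\rho(X^0) \cap \{u > 0\})$. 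On $\{u = 0\} \cap Q^-_\rho(X^0)$ one has $w = -\phi \leq 0 < u(X^0) = w(X^0)$, so the maximum is attained on $\partial_p Q^-_\rho(X^0)$. Combined with (ii), this gives
$$u(X^0) = w(X^0) \leq \sup_{\partial_p Q^-_\rho(X^0)} w \leq \sup_{Q^-_\rho(X^0)} u - c_0 \rho^2,$$
which is the desired non-degeneracy.

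The principal technical obstacle is the construction of $\phi$ in (iii), because $\mathcal{\tilde{L}} u = c - q(K - \gamma x)$ changes sign on $D_T$: it is negative near $x = 0$ (the transformed image of the free/fixed boundary touching point $X^*$) and positive near $x = K/\gamma$. A pure quadratic-plus-linear barrier alone is problematic at interior points where $\mathcal{\tilde{L}} u > 0$, since forcing $\phi(X^0) = 0$ with $\phi \geq 0$ nearby makes $X^0$ a minimum of $\phi$ and prevents $\mathcal{\tilde{L}}\phi(X^0)$ from being positive; the exponential correction is what lets $\mathcal{\tilde{L}}\phi$ dominate $\mathcal{\tilde{L}} u$ while keeping $\phi$ under control on the parabolic boundary. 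The degeneracy of $\sigma^2(K/\gamma-x)^2$ at $x = K/\gamma$ is not a concern because the monotonicity $u_x \geq 0$, $u_t \leq 0$ from Proposition \ref{T1} localizes the relevant parts of $\overline{\{u > 0\}}$ away from that point, and this is what yields a universal constant $c_0$.
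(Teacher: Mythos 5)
Your overall strategy --- a barrier $\phi=a|x-x^0|^2+b(t^0-t)$ compared with $u$ on $Q^-_\rho(X^0)\cap\{u>0\}$, discarding the free--boundary portion of the parabolic boundary because $w=-\phi\le 0<w(X^0)$ there --- is exactly the argument the paper sketches (and attributes to Weiss), and your version is oriented correctly: you ask for $\mathcal{\tilde{L}}\phi\ge \mathcal{\tilde{L}}u$ in $\{u>0\}$ so that $u-\phi$ is a subsolution and the maximum principle applies. (The paper's sketch instead imposes $\mathcal{\tilde{L}}h\le c-qK$ and then invokes ``$h\ge u$ on $\partial_p$ implies $h\ge u$ inside,'' which is the comparison valid for supersolutions of the variational problem, i.e.\ precisely for $\mathcal{\tilde{L}}h\ge c-q(K-\gamma x)$ together with $h\ge 0$; so your condition (iii) is the one actually needed.)

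The gap is in your patch for the sign problem, which you rightly identify as the crux. Since $\mathcal{\tilde{L}}\phi(X^0)=-b-a\sigma^2(K/\gamma-x^0)^2<0$ for any admissible $a,b>0$, condition (iii) is unattainable wherever $c-q(K-\gamma x)\ge 0$, and the exponential correction cannot repair this: a multiple of $e^{r(t^0-t)}$ lies in the kernel of $\mathcal{\tilde{L}}$ and contributes nothing to $\mathcal{\tilde{L}}\phi$, while a correction with $\mathcal{\tilde{L}}\psi>0$ built from it, such as $\lambda\bigl(1-e^{r(t^0-t)}\bigr)$, is $\le 0$ on $Q^-_\rho(X^0)$ and destroys the requirement $\phi\ge 0$ on $\{u=0\}$, which is what lets you discard the free--boundary part of the boundary. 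Your localization claim is also incorrect: monotonicity does not keep $\overline{\{u>0\}}$ away from $x=K/\gamma$ --- by (\ref{x=0}) one has $u>0$ in a full neighborhood of that side; what stays away is the coincidence set, via (\ref{lower-bound}), which in the transformed variables reads $\Lambda\subset\{x\le K/\gamma-c/(q\gamma)\}$, exactly the region where $c-q(K-\gamma x)\le 0$. The correct resolution is therefore to restrict the barrier argument to $X^0$ in or near $\overline{\Lambda}$ --- in particular near the touching point, where the forcing tends to $c-qK<0$ and $a,b$ can be tuned with a margin proportional to $qK-c$ --- which is the only place the lemma is used (Lemma \ref{cone}). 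For interior points of $\{u>0\}$ where the forcing is nonnegative, neither your argument nor the paper's establishes the stated inequality with a universal constant.
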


\proof

The proof of  non-degeneracy is quite standard and it follows by simple maximum principle (see \cite{CPS}). In this particular case with $u_t = \tilde V_t = -V_t \leq 0 $ we can not  apply the
proof for elliptic case (as is standard for American put/call options); cf. \cite{LS} Lemma 2.2.

 We shall present another proof by defining $h(x,t)=a|x-x^0|^2 - b(t-t^0)$ in the cylinder $Q^-_{\rho}(X^0)$. Let further $a,b$ be chosen appropriately  to have $\mathcal{\tilde{L}} h \leq  c-qK$, so that 
$h$ is a sub-solution to the variational problem locally in $Q^-_{\rho}(X^0)$. Now if $h \geq u$ on the parabolic boundary $\partial_p Q^-_{\rho}(X^0)$ then $h \geq u$ on $Q^-_{\rho}(X^0)$. 
In particular if $X^0 \in \{u>0\}$ then we arrive at the contradiction
$0=h(X^0)\geq  u (X^0)>0$. 
By continuity we may let $X^0$ tend to a free boundary point so that the lemma holds for all points in the set $\overline{\{u>0\}} \cap D_T$. \qed

The idea of the proof of this lemma appeared in a paper by G.S. Weiss (see Appendix in \cite{We}) for the elliptic case.

\section{Proofs of the main results} \label{pr}

\subsection{Proof of Proposition\ref{T1}} 
We split the proof in cases, according to statement. The proofs being standard, we shall only sketch the ideas. We shall also consider the problem for the  translated function $\tilde V$.

\vspace{3mm}
\noindent
{\it {Existence}}:\\
To prove the existence of the solution, one can utilize the regularization/penalization or Perron's  method, 
this is standard and follows from various literature, e.g. \cite{F}, also see \cite{YY}. Observe that due to degeneracy of the PDE one needs more care in this part, that the usual uniformly elliptic/parabolic case. Therefore an approximation of the problem to a uniformly parabolic problem or alternatively a domain change by a transformation   will be needed.

\vspace{3mm}
\noindent
{\it Partial Regularity}:
This  is also quite classical and one can easily prove (local) $C_x^{1,\alpha} \cap C_t^{0,\alpha}$ regularity in the literatures. It should be noticed that once the reformulation of the problem as in Lemma \ref{lem1}  is done  then one obtains standard 
parabolic equation with bounded right-hand side. Classical PDE then tells us that the solution is $C_x^{1,\alpha} \cap C_t^{0,\alpha}$, for any $0<\alpha <1$. Observe that without the assumption (\ref{bdry-regul}) one can not obtain such regularities up to the boundary $\{x=0\}$ using classical techniques.

\vspace{3mm}
\noindent
{\it Uniqueness}:\\
To show the uniqueness of the solution, we suppose that there exist two solutions $\tilde{V}_1$ and $\tilde{V}_2$ to Problem (\ref{2})
and define the set $\Omega$ as follows
$$\Omega:=\{X=(x,t); \,\, \tilde{V}_1(X)>\tilde{V}_2(X)\}\subset D_T.$$
We have $\tilde{V}_1> \tilde{V}_2\geq -\gamma x+K$   in $\Omega$, hence $\mathcal{\tilde{L}}(\tilde{V}_1)=c$ in $\Omega$. Moreover $\mathcal{\tilde{L}}(\tilde{V}_2)\geq c$, i.e., $\tilde V_2$ is a super-solution to the equation and therefore we can use the comparison principle (see \cite{F} and \cite{L}) to conclude that $\tilde{V}_2\geq \tilde{V}_1$ in $\Omega$,  which leads us to a contradiction. Observe that in the above argument we may use the  values at  $x=K/\gamma$, as both solutions should have the same (non-assigned) value at $x=K/\gamma$; see (\ref{x=0}).

\vspace{3mm}
\noindent
{\it Monotonicity}:\\
In order to prove the monotonicity in $t$-direction we slide the solution $\tilde{V}$ slightly in $t$-direction and obtain $\tilde{V}_\alpha$
$$\tilde{V}_\alpha(x,t)=\tilde{V}(x,t+\alpha).$$
We want  to show $\tilde{V}_\alpha\leq \tilde{V}$, in $D_{T-\alpha}$ for  $\alpha >0$.

We have (by inspections)  $\tilde{V}_\alpha \leq \tilde{V}$ on the boundary of   $D_{T-\alpha}$ (where we may also use the value (\ref{x=0})) and therefore according to comparison principle (see the uniqueness part) we can deduce that $\tilde{V}_\alpha \leq \tilde{V}$ in the  domain $D_{T-\alpha}$. Observe that the operator does not change, while shifting in $t$-direction as the coefficients are independent of $t$.

For  the monotonicity in $x$-direction, a similar  slide in $x$-direction does not work, since the operator will change. We may does apply the maximum/minimum principle to $\partial_x W_\epsilon$ (introduced earlier, see (\ref{Wx})) in the continuation (non-coincidence set) for $W_\epsilon$. 

We first consider $\tilde W_\epsilon = W_\epsilon (-x+K/\gamma, T-t)$ from (\ref{Wx}) in the region $D_T$, and observe that $\partial_x \tilde W_\epsilon (K/\gamma, t)= \partial_x W_\epsilon (0,t)=0$.

Next we  apply the maximum principle to the equation 
$\partial_x \mathcal{\tilde{L}}(\tilde W_\epsilon) = \partial_x c$ in the region $\{\tilde W_\epsilon  > -x\gamma + K\}$, and observe that $\tilde V_x$ satisfies 
$$
\partial_t(\partial_x \tilde W_\epsilon)-\frac{1}{2}\sigma^2(-x+K/\gamma)^2\partial_{xx} (\partial_x \tilde W_\epsilon) +(\sigma^2+ r-q)(-x+K/\gamma) \partial_{x} (\partial_x \tilde W_\epsilon)  +q (\partial_x \tilde W_\epsilon) = 0 ,
$$ 
in the region $\{ \tilde W_\epsilon > -x\gamma + K\}$; compare (\ref{L-operator}).

On $\{t=K/\gamma\}$ we have by (\ref{Wx}) that $ \partial_x \tilde W_\epsilon (K/\gamma,t)= - \partial_x  W_\epsilon(0,t)=0$, and for 
$ \partial  \{\tilde W_\epsilon > -x\gamma + K\} $ as well as for $\{t=0\}$ 
we have $\partial_x  \tilde W_\epsilon = -\gamma$. Finally on $x=0$ with $\{0<t<T-t^*\}$ we have 
$-\gamma \partial_x  \tilde W_\epsilon \leq 0$. The last inequality depends on the fact that $\tilde W_\epsilon \geq  -x\gamma + K$.

The maximum and minimum principle both apply and we obtain $-\gamma \leq \partial_x  \tilde W_\epsilon  \leq 0$. This naturally shows that the coincidence set (exercise region) for the $\epsilon$-problem is an epi-graph. As $\epsilon$
tends to zero, $W_\epsilon$ tends to $V$ and we obtain the same properties for $V$ and its graph.

\subsection{Proof of Proposition\ref{pro}}

    We recall from (\ref{Lambda}) that the  $\Lambda \neq \emptyset$.  By monotonicity of  $V$  in both $x$ and $t$-directions (Proposition \ref{T1})
    we conclude that  the set $\Lambda$ is connected, and its boundary is an epi-graph, in both spatial and time directions.
     In particular $\Lambda= \overline{int(\Lambda)}$ (closure of the interior).
     Hence $\partial \Lambda = \partial (int(\Lambda))$.

Since $V=\gamma x$ in the interior of $\Lambda$ we may differentiate to obtain
$$\partial_t V=\partial_{xx} V=0 \quad \hbox{and} \quad \partial_x (\gamma x)=\gamma .$$
Implementing these in equation $c\leq \mathcal{L}V$ gives $c\leq -(r-q)x\gamma + r \gamma x $.
Therefore $c\leq q\gamma x$ for all points in the interior of $\Lambda$. Hence there exists a lower bound for $x \in \Lambda $, i.e., 
\begin{equation}\label{lower-bound}
\Lambda \subset \left\{x: x\geq  \frac{c}{q \gamma}\right\}.
\end{equation}
This shows that the free boundary can not touch the time axis.

Next, if $\Lambda$ reaches all the way to $t=T$, with $x< K/\gamma$, then $V(x,T)=K$ and in $\Lambda $ we have $V(x,T) = \gamma x < K$, the continuity of $V$ breaks down, contradicting Proposition \ref{T1}.
From both arguments above we conclude that the free boundary neither touches the $t$-axis, nor any point on $t=T$, with $x<K/\gamma$.
We need now to exclude the point $(K/\gamma, T)$. This is a little bit more tricky and one should use a higher regularity of $V$ up to the corner points.

It is in general well-known that variational problems produce the same amount of regularity for the solutions as that of the given obstacle
in $x$-variable and half regularity in $t$-variable (but  $C_x^{1,1}\cap C_t^{0,1}$ is in general a regularity threshold). 
In particular $\partial_x V$ exists and is continuous  up to the boundary, including the point $(K/\gamma, T)$.
This implies in turn that on one side $\partial_x V(K/\gamma, T)=0$ (derivative along the segment $t=T$) and on the other side $\partial_x V(K/\gamma, T)=\gamma$
 (derivative along the segment $x=K/\gamma$). This is a contradiction, and hence there is a $t^*< T$ such that $\Lambda \subset \{t<t^*\}$.

\subsection{Proof of Theorem \ref{T3}.}

We first notice that due to (\ref{t*}), (\ref{lower-bound}) and the monotonicity of the $V$ (Proposition \ref{T1}), the function $V$ solves an standard parabolic equation in the set
$$ \{ x < c/q\gamma\} \cup \{t> T - (\gamma /c_0)^{1/\alpha }\}.$$
Hence  we can conclude by classical parabolic theory that the $V$ is as smooth as stated in the theorem, up to the fixed boundary (the ingredients are smooth enough).

Next we will prove  the following  lemma, which says that if the solutions grows quadratically from the free boundary then one can after rescaling obtain uniform regularity
as stated in Theorem \ref{T3}. We state the lemma in terms of the function $u$, i.e., a solution to equation (\ref{3}), or more exactly a solution to (\ref{transf-eq1})- (\ref{transf-eq2}).

Let us now introduce some notations:
$\overline{\Gamma}:=\Gamma \cup t^* , $
$$
d^- (X, \partial D_T)= \sup\{\rho : Q_\rho^- (X)\subset D_T \}, 
$$
$$
d^-(X,\Gamma)= \sup\{\rho: Q_\rho^- (X)\subset D_T\setminus \Lambda \}.
$$

\begin{lem} \label{lem2} Under the assumptions of Theorem \ref{T3},  let $u$ be a solution of equation (\ref{transf-eq1})- (\ref{transf-eq2}).
Suppose also  that for $\rho >0$
\begin{equation}\label{growth-u}
\sup_{Q'_{\rho}(X^0)} u \leq C_0 \rho ^2, \qquad (Q'_{\rho}=Q_{\rho} \cap D_T)
\end{equation}
for all $ X^0 \in \overline{\Gamma} $, and a universal $C_0$ independent of the ingredients in the equations.
Then $$u \in \left( C^{1,1}_x \cap C^{0,1}_t\right) (\overline{D_T} \setminus \{x=K/\gamma \}) ,$$
where the norm depends only on the norms of the ingredients.
\end{lem}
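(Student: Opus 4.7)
The plan is a scaling argument based on the quadratic growth hypothesis (\ref{growth-u}), combined with classical $W^{2,p}$ estimates for the uniformly parabolic operator $\tilde{\mathcal{L}}$ on regions bounded away from the degenerate face $\{x=K/\gamma\}$. Fix $X \in \overline{D_T} \setminus \{x = K/\gamma\}$. In the coincidence set $u \equiv 0$ and there is nothing to prove, so assume $X \in \{u>0\}\cup\Gamma$. Let $d_1 = d^-(X, \overline{\Gamma})$ and let $d_2$ be the parabolic distance from $X$ to $\partial_p D_T \setminus \{x=K/\gamma\}$; I would distinguish the cases $d_1 \le d_2$ (free-boundary dominated) and $d_1 > d_2$ (fixed-boundary dominated).

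In the free-boundary dominated case, choose $X^0 = (x^0,t^0) \in \overline{\Gamma}$ at parabolic distance $d_1$ from $X$, set $s = 2d_1$, and rescale to
$$v(y,\tau) = \frac{u(sy+x^0,\, s^2\tau+t^0)}{s^2}$$
on the unit cylinder $Q_1$. Then (\ref{growth-u}) gives $\|v\|_{L^\infty(Q_1)} \le 4 C_0$, while (\ref{scale-eq}) shows that $v$ satisfies $\tilde{\mathcal{L}}_{s,X^0} v = g_s$ in $\{v>0\}\cap Q_1$ with $\|g_s\|_{L^\infty}$ controlled by $c$, $q$, $K$, $\gamma$ alone. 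Since $X^0$ lies at a definite distance $\delta := \dist(X, \{x=K/\gamma\})/2$ from the degenerate face, the coefficient $\sigma^2(K/\gamma - (sy+x^0))^2$ is uniformly bounded above and below on $Q_1$, so $\tilde{\mathcal{L}}_{s,X^0}$ is uniformly parabolic with smooth coefficients whose ellipticity constants depend only on $\delta$ and the given ingredients. Standard $C^{1,1}_x \cap C^{0,1}_t$ regularity for the parabolic obstacle problem (or interior $W^{2,p}$ together with Sobolev embedding, using that $v$ and $|D_y v|$ vanish at the origin) then gives
$$\|D^2_y v\|_{L^\infty(Q_{1/2})} + \|\partial_\tau v\|_{L^\infty(Q_{1/2})} \le C(\delta).$$
The point $X$ corresponds to some $y_X$ of parabolic norm $1/2$, and undoing the scale yields $|\partial_{xx} u(X)| + |\partial_t u(X)| \le C(\delta)$, uniformly in $d_1$. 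In the fixed-boundary dominated case, the cylinder $Q_{d_2}(X)\cap D_T$ lies in $\{u>0\}$ and touches a smooth piece of $\partial_p D_T$ (either $\{x=0\}$ or $\{t=0\}$, since we stay away from $\{x=K/\gamma\}$), on which $u$ is prescribed smoothly by (\ref{transf-eq2}); classical boundary Schauder/$W^{2,p}$ estimates for uniformly parabolic equations with bounded right-hand side then deliver the analogous pointwise bound.

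The central difficulty is maintaining uniform parabolicity after the rescaling, which is exactly what forces the exclusion of the face $\{x=K/\gamma\}$: letting $X$ approach that face would force the coefficient $(K/\gamma - x)^2$ to degenerate on the unit cylinder $Q_1$, and the Calder\'on--Zygmund constants $C(\delta)$ would blow up. A subordinate technicality is matching the two regimes at the corners of $\partial_p D_T$; but by (\ref{t*}) and (\ref{lower-bound}), $\overline{\Gamma}$ sits strictly in the interior of the parabolic cylinder, so the corners lie in a region where $V$ is already known to solve a standard parabolic equation and is smooth by the preliminary observation at the start of the proof of Theorem \ref{T3}.
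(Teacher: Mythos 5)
Your core argument --- rescale by the parabolic distance to the free boundary, use the quadratic growth hypothesis (\ref{growth-u}) to get a uniform $L^\infty$ bound on the rescaled function, observe that $\tilde{\mathcal{L}}$ is uniformly parabolic away from the degenerate face $\{x=K/\gamma\}$, apply interior Schauder, and scale back --- is exactly the paper's proof, modulo the harmless choice of centering the blow-up at the nearest free boundary point rather than at $X$ itself. But your treatment of the fixed-boundary-dominated regime contains a concrete error: you assert that by (\ref{t*}) and (\ref{lower-bound}) the set $\overline{\Gamma}$ ``sits strictly in the interior,'' so that the matching of the two regimes only matters where $V$ is already classically smooth. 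This contradicts Proposition \ref{pro}: after the change of variables the free boundary touches the lateral fixed boundary at $X^*=(0,T-t^*)$, which is the very phenomenon the paper is studying. Estimates (\ref{t*}) and (\ref{lower-bound}) keep $\Gamma$ away from $\{t=0\}$ and from the degenerate face, not from $\{x=0\}$. Consequently, near $X^*$ a point $X$ can be simultaneously close to $\Gamma$ and to $\{x=0\}$, and neither branch of your dichotomy covers this as written: in the boundary branch you cannot conclude $\sup_{Q_{d_2}(X)\cap D_T} u \le C d_2^2$ from classical boundary Schauder theory alone as $d_2\to 0$, and the ``already smooth by the preliminary observation'' escape is unavailable there.

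The repair is what the paper does: take $s=d^-(X,\Gamma)=\sup\{\rho: Q^-_\rho(X)\subset D_T\setminus\Lambda\}$, so the rescaled cylinder is automatically contained in $D_T\cap\{u>0\}$ and only interior Schauder is ever invoked; the needed $L^\infty$ bound for the rescaled function then comes from (\ref{growth-u}) applied at the nearest point of $\overline\Gamma$ --- which contains $X^*$ by definition, so the touching point causes no trouble. Points far from $\Gamma$ but near $\{x=0\}$ or $\{t=0\}$ are handled by the classical parabolic theory invoked in the reduction preceding the lemma. One further caution: do not cite ``standard $C^{1,1}_x\cap C^{0,1}_t$ regularity for the parabolic obstacle problem'' inside this proof; Lemma \ref{lem2} together with the growth estimate \emph{is} the proof of that regularity here, so the citation is circular. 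Your parenthetical alternative (interior estimates on a cylinder contained in $\{v>0\}$, using that $v$ and $\partial_y v$ vanish at the free boundary point) is the non-circular route and is what should be kept.
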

\begin{proof}
Consider a point $X=(x,t)\in D_T \setminus \Lambda$, set   $s:=d^-(X,\Gamma)$,  and let $Y=(y,\theta) \in Q_1(0)$.
According to the discussion preceding the lemma, $u$ is regular in $D_{T-t^*}$. Hence   we may assume $t\geq T-t^*$ (see figure 2). 

Define $$W(Y)=\frac{u(x+sy,t+s^2 \theta)}{s^2}, \qquad \text{in}\,\,\, Q_1(0).$$
By (\ref{growth-u}) we have
$$\| W\|_{L^\infty (Q_1(0))}\leq \sup_{Q_s(X)} \frac{u}{s^2} \leq \sup_{Q_{2s}(\tilde X)} \frac{u}{s^2}
\leq 4C_0. $$
Here $\tilde X \in \Gamma$ is the closest point to $X$ on the free boundary.
 
Now $W$ being bounded in $Q_1(0)$ and $\mathcal{\tilde{L}}_{s,X}W=c ,$
we can apply interior Schauder estimate, to arrive at 
$$|\partial_{xx} W(0,0) |+| \partial_t W|(0,0) \leq C.$$
On the other hand  $ \partial_{xx} W(0,0)= \partial_{xx} u(X)$ and $ \partial_t W(0,0)= \partial_t u(X)$, which implies
$$
u \in \left(C^{1,1}_x \cap C^{0,1}_t\right) (\overline{D_T}) ,
$$
as stated in the lemma. 
\end{proof}

To prove Theorem \ref{T3} we shall only need to show the growth estimate (\ref{growth-u}) for small $\rho$, as the estimate holds for 
$ d^-(X,\partial D_T)/4 < \rho <d^-(X,\partial D_T) $. The proof of estimate (\ref{growth-u}) will follow using 
 the well-known scaling method and blow-up technique, standard in recent theory of  free boundary regularity (cf. \cite{ASU1}, \cite{ASU2}, and \cite{CPS}).

Set 
 $$S_j(X^0,u)=\sup_{Q'_{2^{-j}(X^0)} } u, \qquad Q'_{2^{-j}(X^0)} = Q_{2^{-j}(X^0)} \cap D_T .$$
We claim that for all $j \in \N$, and all $X^0 \in \Gamma$, and any solution $u$ to our equation 
\begin{equation}\label{S_j}
S_{j+1}(X^0,u)\leq \max \left\lbrace  4^{-j} C_1, 4^{-1} S_j(X^0,u),...,4^{-j}S_1(X^0,u) \right\rbrace ,
\end{equation}
for a universal constant $C_1$, depending only on the norms of the ingredients.

Suppose this is true, then we see that (\ref{growth-u}) follows by inspection. Indeed, 
for any $\rho $ (small) we may choose $j$ such that $ 2^{-j-1}\leq \rho < 2^{-j}$. From (\ref{S_j}) then it follows that
$ S_\rho \leq S_{j} $. Now if the maximum of the right hand side in (\ref{S_j}) happens for $4^{-j} C_1$ then we are done with $C_0=4C_1$. If not then
the maximum is $4^{-k-1} S_{j-k}$ for some $k$, which implies
$ S_\rho \leq S_{j} \leq 4^{-k-1} S_{j-k}$ and we can repeat the argument for $S_{j-k}$, until $k=j$.

We shall now prove (\ref{S_j}), using a contradictory argument.
Hence suppose (\ref{S_j}) fails. Then, for every $n \in \N$, there exist $X^n \in \overline \Gamma$,   and $j_n \in \N$ such that
\begin{equation}\label{contra}
S_{j_n+1}(X^n)>\max \{  n4^{-j_n},  4^{-1} S_{j_n}(X^n),....,4^{-j_n} S_1(X^n)  \},
\end{equation}
for $u$ solving our problem.
Since $S_{j_n+1}(X^n)>  n4^{-j_n}, $ i.e.,  $4^{j_n}S_{j_n+1}(X^n)>  n $ we deduce that
$$j_n \rightarrow \infty \qquad \text{as} \qquad n \rightarrow \infty.$$

 Now set 
 $$  B_n= 2^{j_n}(D_T \setminus X^n)= \{ X: 2^{j_n}(X-X^n) \in D_T\},  $$
 and
 $$v_n(x,t):=\frac{u({2}^ {-j_n} x+x^n,{2}^ {-2j_n} t+t^n )}{S_{j_n+1}(X^n)}, \qquad \hbox{in} \qquad B_n,
$$
 and recall Discussion \ref{dis}.
From (\ref{scale-op}), (\ref{scale-eq}), (\ref{transf-eq1}), and  (\ref{transf-eq2}) we obtain
the rescaled equation 
\begin{equation}\label{eq-v}
\mathcal{\tilde{L}}_n v_n(x,t)=\frac{{2}^ {-2j_n}}{S_{j_n+1}} (\mathcal{\tilde{L}}u) ({2}^ {-j_n}x+x^n,{2}^ {-2j_n}t+t^n)
\qquad \to \quad 0,
\end{equation}
in $B_n.$ Here  $\mathcal{\tilde{L}}_n:= \mathcal{\tilde{L}}_{({2}^ {-j_n}),X^n} $ (see  (\ref{scale-op})), and the right hand side tends to zero follows from (\ref{transf-eq1}), and  (\ref{contra}).

Next for any $R >1$, there is a $m$ such that $2^{m-1}\leq R < 2^m$ and  by using equation (\ref{S_j}) we have
 \begin{equation}\label{vn-uniform}
   \begin{array}{ll}
 \|v_n\|_{L^\infty(Q_R)}&=\frac{\sup _{Q_R} |u(2^{-j_n}x+x^n,2^{-2j_n}t+t^n)|}{S_{j_n+1}(X^n)} \\
 &\leq  \frac{\sup _{Q_{2^m}} |u(2^{-j_n}x+x^n,2^{-2j_n}t+t^n)|}{S_{j_n+1}(X^n,u)}\\
& \leq \frac{\sup _{Q_{2^{m+j_n}}}u}{S_{j_n+1}}\leq \frac{S_{m+j_n}}{S_{j_n+1}}
 \leq 4^{m-1} C_0  \leq C_0 R^2.
 \end{array}
 \end{equation}
 Therefore $v_n$'s are uniformly bounded in any compact subset of $B_n$.
Since the operators $\tilde{\mathcal{L}}_n $ are also uniformly elliptic (in $ B_n$ but 
  away from  $ \{x= 2^{j_n}K/\gamma\}$)  and $v_n$ satisfying the equation (\ref{eq-v}) and they are uniformly bounded by (\ref{vn-uniform})
   we  conclude, by classical Schauder theory, that they are uniformly $C_x^{1,\alpha}\cap C_t^{\alpha}$ in any compact subsets of $B_n$.

In particular by compactness
there is a subsequence (relabeled with the same sequence) such that 
\begin{equation}\label{eq-v2}
v_n \to v_0 \quad  \hbox{ in } B_\infty
\qquad
\qquad
v_0 \geq 0  \quad \hbox{ in } B_\infty
\end{equation}
where $B_\infty = \lim_n B_n$, and $v(0)=0$.

 On the other hand one can easily deduce from the expression for $\mathcal{\tilde{L}}_n$ that
$$\tilde{\mathcal{L}}_n \rightarrow \mathcal{L}_0, \qquad \hbox{as} \qquad n \rightarrow \infty,$$
where $\mathcal{L}_0$ is a scaled version of the heat equation, i.e.,
$$\mathcal{L}_0= \partial _t -\frac{\sigma^2}{2}(-x^0+K/\gamma)\partial_ {xx},$$
and
$$ \mathcal{L}_0 v_0 = 0,$$
with  $v_0(0,0)=0$ and $v_0\geq 0$. It is also crucial to see that since $v_n(0,0)=0$ and $v_n \geq 0$ we have  $\partial_x v_n (0,0)=0$ ($v_n$ is a $C_x^{1}$).
By the fact that the compactness is in the space $C_x^{1,\alpha}$ for a $\alpha >0$ and this is true up to the boundary of the domain (due to smoothness of the boundary) we conclude that 
\begin{equation}\label{hopf}
\partial_xv_0(0,0)=0
\end{equation}
 
Next we see that the domain $B_\infty$ has three possibilities, depending on the value
\begin{equation}\label{limit-value}
\lim_j \frac{d^-(X^{j_n},\partial D_T)}{2^{-{j_n}}} \in \{0,a_0,\infty \},
\end{equation}
for some finite number $a_0>0$.
Now in the case $a_0, \infty$ above we have that the origin is an interior point of the set $B_\infty$ and $v_0$ takes a local minimum, which violates
the minimum principle for caloric functions (or parabolic PDEs). When the limit in (\ref{limit-value})  is zero then $(0,0) \in \partial B_\infty$ and here one can apply the Hopf boundary point lemma (see \cite{L}, Lemma 2.6) to obtain a contradiction to (\ref{hopf}).

\vspace{3mm}
\subsection{Proof of Theorem \ref{T4}.}

%We shall prove this lemma for a general class of solutions, independent of the ingredients but depending only on the norms.   Therefore letting the ingredients vary (but be fix in norm) we may consider a class of problems,

 For every given $\epsilon$ define the parabolic-cone $P_\epsilon$ as follows
$$P_\epsilon=\{X=(x,t)\, ;\,\, t> \frac{1}{\epsilon}   x^2 + (T-t^*)  \}.$$
To prove that the free boundary touches the fixed boundary tangentially, we show that for every small $\epsilon$, $P_\epsilon$
contains the free boundary   $\Gamma$ asymptotically close to the point $(0,T-t^*)$.
 We state and prove this in the following lemma, which leads us to the proof of Theorem \ref{T4}.\\

\begin{lem} \label{cone} Let $u$ be a solution to our problem. Then for every $\epsilon > 0$, there exists $r_\epsilon$ such that,
$$\Gamma (u)  \cap Q_{r_\epsilon}(0,T-t^* )\subset P_\epsilon \cap Q_{r_\epsilon}(0, T-t^*). $$
\end{lem}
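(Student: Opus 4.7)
The plan is to argue by contradiction via a parabolic blow-up at the touching point. Working in the transformed setup of Discussion \ref{dis}, where the touching point is $(0,T-t^*)$, suppose that for some $\epsilon_0>0$ there is a sequence $X^k=(x^k,t^k)\in\Gamma(u)$ with $X^k\to(0,T-t^*)$, $x^k>0$, and $0\leq t^k-(T-t^*)\leq (x^k)^2/\epsilon_0$. With $r_k:=x^k\to 0$, I form the parabolic rescalings
\[
v_k(y,\theta):=\frac{u(r_k y,\,(T-t^*)+r_k^2\theta)}{r_k^2}.
\]
By Theorem \ref{T3} together with Proposition \ref{T1}, the family $\{v_k\}$ is uniformly $C^{1,1}_y\cap C^{0,1}_\theta$ on compacts, satisfies the parabolic growth $|v_k|\leq C(y^2+|\theta|)$, and is monotone (non-decreasing in $y$, non-increasing in $\theta$). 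A subsequential limit $v_0\in C^{1,\alpha}_{\mathrm{loc}}$ on $\{y\geq 0\}\times\mathbb{R}$ then satisfies $v_0\geq 0$, $v_0(0,\theta)=0$, the inherited monotonicity, and the limiting constant-coefficient obstacle problem $\mathcal{L}_0 v_0=-b\,\chi_{\{v_0>0\}}$, where $\mathcal{L}_0=\partial_\theta-a\partial_{yy}$, $a=\sigma^2 K^2/(2\gamma^2)$ and $b=qK-c>0$. The rescaled free-boundary points $(1,\theta_k)=(1,(t^k-(T-t^*))/(x^k)^2)$ lie in $\{1\}\times[0,1/\epsilon_0]$; passing to a further subsequence $\theta_k\to\theta_0\in[0,1/\epsilon_0]$, the uniform non-degeneracy of Lemma \ref{L1} passes to the limit and gives $(1,\theta_0)\in\partial\{v_0>0\}$, in particular $v_0(1,\theta_0)=0$.

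The task reduces to proving $v_0\equiv\tfrac{b}{2a}y^2$, which contradicts $v_0(1,\theta_0)=0$. I split at $\theta=0$. For $\theta\leq 0$, non-degeneracy at $(0,0)$ combined with the monotonicity forces $v_0>0$ in $\{y>0,\theta<0\}$; setting $w:=v_0-\tfrac{b}{2a}y^2$, both $v_0$ and the quadratic solve $\mathcal{L}_0=-b$ there, so $w$ is caloric in $\{y>0,\theta<0\}$ with $w(0,\theta)=0$ and $|w|\leq C(y^2+|\theta|)$. The parabolic reflection principle yields an odd-in-$y$ extension $\tilde w$ caloric on $\mathbb{R}\times(-\infty,0)$ with the same growth; the backward parabolic Liouville theorem classifies $\tilde w$ as a caloric polynomial of parabolic degree $\leq 2$, and oddness in $y$ leaves only $\tilde w=Cy$. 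The identity $\partial_y v_0(0,0)=0$---inherited from $\partial_x u(0,t)\equiv 0$ for $t\geq T-t^*$, which holds because the exercise region in original coordinates contains $\{s\leq t^*\}$ where $V=\gamma x$ and hence $V_x=\gamma$ (Proposition \ref{T1})---forces $C=0$, so $v_0=\tfrac{b}{2a}y^2$ on $\{y>0,\theta\leq 0\}$. For $\theta\geq 0$, the same identity gives $\partial_y v_0(0,\theta)=0$ throughout, and the one-sided bound $\partial_{yy}v_0\leq b/a$ (from the PDE plus $\partial_\theta v_0\leq 0$ in $\{v_0>0\}$, trivial in $\{v_0=0\}$) together with the concavity it produces gives $v_0(\cdot,\theta)\leq\tfrac{b}{2a}y^2$. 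Setting $\Phi:=\tfrac{b}{2a}y^2-v_0\geq 0$, $\Phi$ is subcaloric ($\mathcal{L}_0\Phi=0$ where $v_0>0$, $\mathcal{L}_0\Phi=-b<0$ where $v_0=0$) and vanishes on the parabolic boundary $\{y=0\}\cup\{\theta=0\}$ by the first half, so the parabolic maximum principle (with the polynomial-growth Tychonov condition) yields $\Phi\equiv 0$, completing $v_0\equiv\tfrac{b}{2a}y^2$ globally.

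The main technical obstacle is the classification step for the past half: combining the parabolic reflection principle across the fixed boundary with the Liouville-type classification of caloric functions of parabolic polynomial growth on $\mathbb{R}\times(-\infty,0)$, and justifying $\partial_y v_0(0,0)=0$ in the $C^{1,\alpha}$ limit of the blow-ups. The latter uses only the pointwise value $\partial_x u(0,T-t^*)=0$, which follows from $V_x(K/\gamma,t^*)=\gamma$ as noted in the remark after Proposition \ref{pro}, so every $v_k$ has $\partial_y v_k(0,0)=0$ identically. The remaining ingredients---the monotonicity from Proposition \ref{T1}, the non-degeneracy from Lemma \ref{L1}, the $C^{1,1}_y\cap C^{0,1}_\theta$ compactness from Theorem \ref{T3}, and the parabolic maximum principles---are already in place.
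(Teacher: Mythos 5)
Your proposal is correct in outline and reaches the same contradiction as the paper, but it classifies the blow-up limit by a genuinely different argument. The paper's proof, after the same contradiction setup and parabolic rescaling at $(0,T-t^*)$, invokes Theorem II of \cite{ASU2} to conclude that the global limit is time-independent, reduces to the one-dimensional ODE $D_{xx}u_0=A\chi_{\{u_0>0\}}$, and identifies $u_0=Ax_+^2/2$, which is incompatible with a free boundary point at unit distance. You instead classify $v_0$ directly, splitting at $\theta=0$: in the past you use that the exercise region lies in $\{t\ge T-t^*\}$, so $\mathcal{\tilde L}_0v_0=-b$ holds on all of $\{y>0,\theta<0\}$, subtract the particular quadratic, apply odd reflection across $\{y=0\}$ and the parabolic Liouville theorem, and kill the residual linear term with $\partial_yv_0(0,0)=0$; in the future you combine $\partial_\theta v_0\le 0$ with the equation to get $\partial_{yy}v_0\le b/a$, hence $v_0\le\frac{b}{2a}y^2$, and close with a Phragm\'en--Lindel\"of-type maximum principle for the subcaloric deficit $\Phi$. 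What the paper's route buys is brevity, at the price of leaning on an external classification theorem whose application in the half-space $\{x>0\}$ with lateral Dirichlet data is itself not entirely immediate; what your route buys is a self-contained argument that exploits structure specific to this problem (monotonicity from Proposition \ref{T1}, the location of $\Lambda$, and the identity $V_x(K/\gamma,s)=\gamma$ for $s\le t^*$) and, as a bonus, identifies the blow-up limit completely as $\frac{b}{2a}y^2$. Two small points: your stated reason for $v_0>0$ in $\{y>0,\theta<0\}$ (``non-degeneracy at $(0,0)$ plus monotonicity'') is not quite the right mechanism --- what you actually need, and what is true, is that the equation $\mathcal{\tilde L}_0v_0=-b$ holds throughout $\{\theta<0\}$ because $\{u=0\}\subset\{t\ge T-t^*\}$ (strict positivity then follows since a zero would propagate by monotonicity to a rectangle on which $v_0\equiv0$, contradicting the equation), but your reflection step only uses the equation, so nothing breaks; and the justification of $\partial_yv_0(0,0)=0$ requires the uniform $C^{1,\alpha}_x$ convergence up to the lateral boundary $\{y=0\}$, which is indeed supplied by Theorem \ref{T3} since the touching point lies in $\overline{D}_T\setminus\{x=0\}$ in the original variables.
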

\begin{proof}
Suppose, towards a contradiction, that the statement in the lemma   fails. Then for every $j \in \N$, there exist
$$X^j=(x^j,t^j) \in \Gamma(u) \cap Q_{r_\epsilon}(0, T-t^*)     \setminus P_\epsilon  \cap Q_{r_\epsilon}(0, T-t^*) ,$$
with $x^j \to 0$, and $t^j \to T-t^*$. Let $s_j=|X^j - (0,T-t^*) |$ (the parabolic distance), and 
define $u_j$ be the scaled version of $u$ at $(0, T-t^*)$ which is defined in the set
$$B_j:=\{(x,t): \ 0<x< K/s_j\gamma , \ -(T-t^*)/s_j < t < t^*/s_j \}.$$
More exactly
$$u_j (X)=\frac{u(s_j x,{s_j}^2 t+T-t^*)}{{s_j}^2}.$$
Now for every scaled function $\tilde{u}_j$, one can find a point $\tilde{X}^j=(x^j/s_j,t^j/{s_j}^2) \in \Gamma(u_j)$ with $|\tilde X^j|=1$.
Next we use Theorem \ref{T3} to see that $ u_j \in C_x^{1,1}\cap C_t^{0,1}$ locally on each compact set of $B_j$. Therefore for some subsequence
$$u_j\rightarrow u_0 \qquad  \text{and}\qquad  \tilde{X}^j\rightarrow X^0$$
were $u_0$ is a global solution in $B_\infty=\{x>0\}$, $X^0 \in \Gamma (u_0) $ with  $|X^0|=1$.

The scaled operator can also be written as $ \tilde{\mathcal{L}}_{j} = \tilde{\mathcal{L}}_{s_j,(0,t^*)}, $ which in turn gives the equation
$$
\tilde{\mathcal{L}}_{j}  u_j = (c -q(-\gamma s_jx + K))\chi_{\{ u_j >0\}}.
$$
As $j \to \infty $ we arrive at a global solution
$$
\tilde{\mathcal{L}}_{0}  u_j = (c -q K)\chi_{ \{u_0 >0\}},
$$
with 
$$\tilde{\mathcal{L}}_{0}= \partial_t - \frac{\sigma^2K}{2\gamma} \partial_{xx},$$
and   $0\leq u_0 (X) \leq  C |X|^2$, $X^0 \in \Gamma (u_0)$. 
Rewriting this we have
$$ \frac{\sigma^2K}{2\gamma}\partial_{xx} u_0  - \partial_t u_0 =  (q K-c)\chi_{\{ u_0 >0\}},$$
which requires $qK-c>0$ for presence of a free boundary; otherwise non-degeneracy can not be applied. 

Here one may scale the operator to reduce it to the heat equation, and then 
use Theorem II in \cite{ASU2} to claim that $\partial_t u_0=0$ (i.e., $u_0$ is time independent). But then $u_0$ is a one dimensional solution to 
$D_{xx}u_0 = A\chi_{\{u_0>0\}}$ for some $A>0$, and that (by non-degeneracy, Lemma \ref{nd}) both origin and $x^0$ (in $X^0=(x^0,t^0)$) are free boundary points.
This is a contradiction as simple computations show that $u_0(x)=Ax_+^2/2$ if the origin is a free boundary point.
\end{proof}

The parabolic tangentiality  of Theorem (\ref{T4}) follows from Lemma (\ref{cone}), by taking the inverse of the relation $\epsilon\rightarrow r_\epsilon$, and denoting it   $\sigma(r)$.

\section{Discussion}
In closing we want to bring to the readers attention several facts and clarifications.

We have assumed that $K,r, c$ and many other ingredients in this paper are constants, which actually is not necessary for our main theorems 
about optimal regularity or the parabolically tangential behavior of the free boundary to work out. All one needs is uniform behavior (in their norm) to
have similar results.

The standing conditions $rK >c$ or/and $qK>c$ may also be dropped but then a double obstacle may occur and if these value also change in time and stock-value $x$ then one may have too complicated situations as the bond value $V$ may touch both upper and lower obstacle and switch between them too many times.
Such an analysis requires deeper insight into the problem. It is noteworthy that double obstacle problems are not so well studied close to a free boundary when both obstacles hit. In our case this is the point $(K/\gamma, t^*)$.  

Other aspects that may be subject for study, by our methods can be the case of convertible bonds with random interest and call feature. The problem now will become two space-dimensional and more delicate. It is however likely (if not apparent) that our methods work in such cases as well and give similar results.

A final remark that we would want to make is the case when the underlying asset is a combination of several assets, and max/min value for such stocks   may be considered as the conversion possibility. 

We hope to come back to such problems for detailed analysis in the future.

\end{document}